%% file: conference_101719.tex
\documentclass[letterpaper, 10 pt, conference]{ieeeconf}  

\IEEEoverridecommandlockouts                              

\title{\LARGE \bf
Commutator-Driven Stability Bounds for Periodic Switching }

\author{
Debanjan Mallik$^{1}$,
Nikhil Chopra$^{1}$
\thanks{$^{1}$The authors are with Institute for Systems Research, 
        University of Maryland, College Park, MD 20742, USA.
        {(e-mail: \tt\small dmallik@umd.edu; nchopra@umd.edu}).}
\thanks{This article has been accepted for publication in IEEE Control Systems Letters.
        DOI: 10.1109/LCSYS.2026.3708744.
        \copyright~2026 IEEE. Personal use of this material is permitted.
        Permission from IEEE must be obtained for all other uses, in any current or
        future media, including reprinting/republishing this material for advertising
        or promotional purposes, creating new collective works, for resale or
        redistribution to servers or lists, or reuse of any copyrighted component of
        this work in other works.}
}

\usepackage{cite}
\usepackage{amsmath,amssymb,amsfonts}

\usepackage{amsthm}
\usepackage{pgfplots}
\usepgfplotslibrary{fillbetween}
\pgfplotsset{compat=1.18}
\newtheorem{theorem}{Theorem} 
\newtheorem{lemma}[theorem]{Lemma} 

\newtheorem{remark}[theorem]{Remark}
\usepackage{graphicx}
\usepackage{textcomp}
\usepackage{xcolor}
\usepackage{verbatim}
\usepackage{algorithm}
\usepackage{algorithmic}

\def\BibTeX{{\rm B\kern-.05em{\sc i\kern-.025em b}\kern-.08em
    T\kern-.1667em\lower.7ex\hbox{E}\kern-.125emX}}

\begin{document}
\maketitle

\begin{abstract}
Averaged models are widely used to analyze periodically switched linear systems, yet the stability of the averaged flow does not automatically guarantee the stability of the true switched dynamics. The discrepancy arises from noncommutativity among the subsystem generators,
so stability certificates benefit from bounds that expose this dependence
in a form compatible with Lyapunov contraction metrics. We derive an explicit operator-norm bound for the one-period mismatch between the switched and averaged propagators, in which the leading-order error depends explicitly on the pairwise commutator norms of the scaled mode generators, with a closed-form prefactor depending only on the generator norms.
This bound yields a computable threshold for the switching period below which the switched system inherits exponential stability from its averaged model, uniformly certified over admissible duty fractions. The analysis extends to an arbitrary number of switching modes via telescoping induction, and a semidefinite program provides sampled duty-dependent Lyapunov metrics for implementing the certificate.

\end{abstract}

\input{intro}
\input{twomode}
\input{mmode}
\input{example}

\section{Conclusion and Future Work}
This letter derived explicit commutator-norm bounds for the mismatch between cyclic switched propagators and their averaged flows. Combined with Lyapunov contraction of the averaged generator, these bounds give computable switching-period certificates for fixed cyclic \(m\)-mode schedules. A three-mode numerical example illustrated the sampled computation and showed how commutator-graph edges can affect the sufficient certificate even when the corresponding interaction is invisible to the Floquet spectrum. Future work will pursue sharper bounds, richer metric parameterizations, and extensions beyond fixed cyclic orders.
\bibliographystyle{IEEEtran}
\bibliography{ref}

\end{document}

%% file: intro.tex
\section{Introduction}
\label{intro}

Periodically switched linear systems admit two complementary descriptions: the one-period state-transition (monodromy) matrix, given by a product of exponentials, and the averaged flow, determined by convex combinations of the subsystem matrices. We consider the switched linear system
\[
    \dot x(t)=A_{\sigma(t)}x(t),\qquad x(t)\in\mathbb R^n,
\]
under periodic switching. In this letter, the switching signal is
restricted to a fixed cyclic schedule: over each period \(\tau\), the
modes are visited in the prescribed order \(1,\ldots,m\), and mode
\(i\) is active for time \(\alpha_i\tau\), where
\[
    \alpha\in\Delta^{m-1}:=
    \{\alpha\in\mathbb R^m_{\ge0}:\sum_{i=1}^m\alpha_i=1\}.
\] Thus, cyclic switching is the structured periodic subclass considered
here. For fast switching, the averaged model often captures the dominant stability behavior, but the inheritance of stability by the true system is obstructed by noncommutativity among the modes. This motivates the search for explicit bounds that quantify the conditions under which averaged contraction dominates the splitting error. In this work, we derive such bounds and obtain uniform exponential stability certificates for admissible duty-cycle sets.

Stability of periodically switched linear systems has long been studied
through direct analysis of finite products of exponentials~\cite{tokarzewski1987}
and Floquet-theoretic characterizations of the monodromy~\cite{gokcek2004}.
Related existence results show that if a stable convex combination of
subsystem matrices exists, then a periodic switching law can be constructed
to asymptotically stabilize the overall system~\cite{xie2005}. In the
fast-switching regime, Porfiri, Roberson and Stilwell~\cite{porfiri2008}
combine classical averaging methods~\cite{ezzine1989} with exponential
splitting to derive computable switching-period bounds that expose the
role of commutators. Averaging-based stability results
without commutativity assumptions have also been developed in related
settings, including switched-DAE formulations~\cite{MironchenkoWirthWulff2015,
MostacciuoloTrennVasca2017}. For a broad survey of
stability criteria for switched and hybrid systems, including common
Lyapunov functions, dwell-time constraints, and periodic versus arbitrary
switching, see~\cite{shorten2007}.

Lie-algebraic stability criteria show that commutation relations among subsystem matrices can fundamentally shape switched system stability under arbitrary switching, with solvability-type conditions yielding common quadratic Lyapunov functions and uniform exponential stability~\cite{liberzon1999, agrachev2001}; for a recent retrospective on this commutator-relations viewpoint, see~\cite{liberzon2023}. Robust Lie-algebraic criteria for arbitrary switching were developed in~\cite{agrachev2012}; in continuous time, these criteria use Levi and Cartan decompositions to quantify closeness to solvable or solvable-plus-compact Lie-algebraic structure. Commutator conditions have also been used to characterize stabilizing switching signals under dwell-time restrictions~\cite{kundu2019,kundu2020}, where bounds on commutator norms serve as qualitative existence conditions
for stable switching sequences. The present work is complementary to these lines of work; it uses commutator norms quantitatively, embedding them
directly into a computable period threshold for fixed
cyclic schedules.

The role of commutators in exponential splitting is classical from the BCH expansion and from Lie-algebraic approaches. The contribution here is not the identification of \([X,Y]\) as the
leading obstruction, but a duty-uniform and order-dependent Lyapunov
certificate built from a self-contained induced-norm splitting estimate with a computable closed-form prefactor. More specifically:
\begin{itemize}
\item We obtain a two-factor estimate for
\(\|e^Xe^Y-e^{X+Y}\|_Q\) with a closed-form prefactor
depending on \(\|X\|_Q\) and \(\|Y\|_Q\), multiplicative dependence on
\(\|[X,Y]\|_Q\), and no hidden compactness constant. In contrast to the matrix measure growth factors in~\cite{porfiri2008}, the growth factors here are expressed through ordinary induced-norm quantities in the same \(Q\)-metric used to state the splitting bound, making the bound directly compatible with the Lyapunov metric used in the certificate.
\item We extend the estimate to \(m\)-mode cyclic products by a
telescoping argument and obtain a duty- and order-dependent sufficient
condition \(\kappa^\ast(\tau)<1\) for uniform exponential stability over
\(\mathcal A\subseteq\Delta^{m-1}\).
\item We give a sampled SDP implementation using affine
duty-dependent metrics \(P(\alpha)=\sum_i\alpha_iP_i\), which permits
the Lyapunov metric to vary with the duty vector. A three-mode example illustrates the sampled computation, the commutator-graph interpretation in Remark~\ref{remark:commutator_graph}, and comparison with a
sampled Floquet threshold. \end{itemize}

%% file: twomode.tex
\section{Two-Mode Lyapunov Contraction}
\label{lyap_cont}
Let \(\mathcal{V} = \mathbb{R}^n\) and $G = \mathrm{GL}(\mathcal{V})\cong \mathrm{GL}(n, \mathbb{R})$ with Lie algebra $\mathfrak{g} = \mathfrak{gl}(\mathcal{V}).$ For symmetric matrices, \(X\preceq Y\) means that \(Y-X\) is positive semidefinite, and \(\mathbb S^n_{++}\) denotes the cone of real symmetric positive definite \(n\times n\) matrices. Fix \(P \in \mathbb{S}_{++}^n\) and equip \(\mathcal{V}\) with the inner product in the $P$-metric \(\langle x,y\rangle_P := x^\top P y\) and norm \(\|x\|_P = \sqrt{x^\top Px}\). The induced operator norm is \[\|M\|_P = \sup_{x\neq 0} \frac{\|Mx\|_P}{\|x\|_P}= \sup_{\|x\|_P=1}\|Mx\|_P, \quad M \in \mathrm{End}(\mathcal{V}).\] Let the quadratic Lyapunov function be \(W(x) = \|x\|_P^2 = x^\top Px.\) Moreover, let \(A_1, A_2 \in \mathfrak{g}\). For \(\alpha \in (0, 1)\), define \(B_1(\alpha):= \alpha A_1\), \(B_2(\alpha) := (1-\alpha) A_2\), and \( B(\alpha) := B_1(\alpha) + B_2(\alpha) :=  \alpha A_1+(1-\alpha) A_2 = \bar A(\alpha)\). For a period \(\tau >0\), the monodromy matrix or one-period propagator is \[\Phi(\tau, \alpha) := \mathrm{exp}\left(\tau B_2\left(\alpha\right)\right) \mathrm{exp}(\tau B_1(\alpha)) \in \,G\,\] 

\noindent and the averaged propagator is \(\Psi(\tau, \alpha) := \mathrm{exp}(\tau B(\alpha)).\)
\begin{lemma} [Contraction] 
     Assume there exist a closed interval $I \subset (0,1)$ and constants $P\succ 0$, $\eta>0$ such that the LMI \begin{equation}B(\alpha)^\top P + PB(\alpha) \preceq -2\eta P,\quad\alpha \in I
    \label{eq:1}
    \end{equation} holds. Then for all \(\alpha \in I\) and all \(t\ge0\),
    \[\|\Psi(t, \alpha)\|_P\ = \| \mathrm{exp}(tB(\alpha))\|_P \le e^{-\eta t}.\] 
    \label{lem:contraction in the uniformly continuous semigroup}
\end{lemma}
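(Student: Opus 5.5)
The plan is the standard Lyapunov energy argument for a fixed \(\alpha\in I\). Since \(\alpha\) plays no role beyond making \eqref{eq:1} available, write \(B=B(\alpha)\) throughout.

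Fix \(x_0\in\mathbb R^n\) and let \(x(t)=\exp(tB)x_0\), which solves \(\dot x=Bx\). Set \(W(t)=x(t)^\top P x(t)=\|x(t)\|_P^2\). Differentiating along the trajectory and applying \eqref{eq:1} gives
\[
\dot W(t)=x(t)^\top\bigl(B^\top P+PB\bigr)x(t)\le -2\eta\, x(t)^\top P x(t)=-2\eta W(t).
\]

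The differential inequality \(\dot W\le -2\eta W\) is integrated by the usual integrating-factor step. The quantity \(e^{2\eta t}W(t)\) has derivative \(e^{2\eta t}(\dot W+2\eta W)\le 0\), so it is nonincreasing. Hence
\[
W(t)\le e^{-2\eta t}W(0)\qquad\text{for all } t\ge 0.
\]
Taking square roots yields \(\|\exp(tB)x_0\|_P\le e^{-\eta t}\|x_0\|_P\).

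Since \(x_0\) is arbitrary, taking the supremum over \(\|x_0\|_P=1\) in the definition of the induced norm \(\|\cdot\|_P\) gives \(\|\Psi(t,\alpha)\|_P\le e^{-\eta t}\). This holds for every \(\alpha\in I\) and \(t\ge0\), with \(P\) and \(\eta\) independent of \(\alpha\).

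There is no real obstacle here. The only care points are applying \eqref{eq:1} pointwise in \(\alpha\) as a quadratic-form inequality evaluated at \(x(t)\), and noting that \(W\) is smooth because \(t\mapsto\exp(tB)\) is analytic, so the derivative computation is legitimate. Closedness of \(I\) is not needed for this lemma; it matters only for later uniformity arguments.
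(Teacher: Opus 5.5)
Your proof is correct and follows the paper's argument essentially step for step. You differentiate \(W=x^\top Px\) along \(x(t)=e^{tB(\alpha)}x_0\), use the LMI to obtain \(\dot W\le -2\eta W\), integrate (you spell out the integrating-factor step, which the paper leaves implicit), and take the supremum over \(\|x_0\|_P=1\).
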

\begin{proof}
 For the LTI system $\dot{x}(t) = B(\alpha)x(t)$ with $x(0) = x_0$, the solution is
$x(t) = e^{tB(\alpha)}x_0$. Differentiating $W = x^\top P x$ along trajectories gives
\[
    \dot{W} = x^\top\!\left(B(\alpha)^\top P + PB(\alpha)\right)x \le -2\eta W.
\]
Hence $W(t) \le e^{-2\eta t}W(0)$, and therefore
\[
    \|e^{tB(\alpha)}x_0\|_P \le e^{-\eta t}\|x_0\|_P.
\]
Taking the supremum over $\|x_0\|_P = 1$ proves the lemma.
\end{proof}
For fixed \(P\), condition~\eqref{eq:1} is a common quadratic Lyapunov inequality for the averaged family \(B(\alpha)\), \(\alpha\in I\), not for the individual modes unless endpoint duties are included. Thus \(B(\alpha)\) is Hurwitz on \(I\), while \(A_1\) and \(A_2\) need not be. Since the LHS of \eqref{eq:1} is affine in \(\alpha\), checking the endpoints \(\alpha_-\) and \(\alpha_+\) of a closed
interval \(I=[\alpha_-,\alpha_+]\) suffices. This shortcut is special to the fixed-metric two-mode case. The restriction \(I\subset(0,1)\) avoids
zero-dwell degeneracies; endpoints can be included by continuity when
allowed. With duty-dependent metrics \(P(\alpha)\) used later, the
Lyapunov inequality is generally nonlinear in \(\alpha\), so we enforce
it on a finite duty grid.

\subsection{Splitting error and the commutator ideal}
For the set of matrices \(\{A_1, A_2\}\), we define the smallest Lie algebra generated by $A_1$ and $A_2$ as
$\mathfrak{h} :=  \{A_1, A_2\}_{\mathrm{Lie}} \subset \mathfrak{g}.$ Since $\mathfrak{h}$ is closed under scalar multiplication and $\alpha\in(0,1)$, $\{ B_1(\alpha), B_2(\alpha)\}_{\mathrm{Lie}} = \mathfrak{h}.$
We know the map \((\tau, \alpha) \mapsto \Phi(\tau, \alpha)\) is analytic, and \(\Phi (0, \alpha) = \mathbb{I}_n\). Since the principal matrix logarithm is well-defined on an open neighborhood \(U \subset \mathrm{GL}(n)\) of $\mathbb{I}_n$~\cite{hall2015}, there exists \(\varepsilon>0\) such that the norm ball \(\mathcal{B}_\varepsilon(\mathbb{I}_n) \subset U\). Fix \(\delta_0>0\), and consider the compact set \(K:= [-\delta_0, \delta_0] \times I\). From Heine-Cantor, continuity of \(\Phi\) ensures its uniform continuity over \(K\). Thus, for any chosen \(\varepsilon>0\), there exists \(\rho >0\) such that for all \((\tau, \alpha) \in K\),
\begin{equation}|\tau -0| < \rho \implies \|\Phi(\tau, \alpha) - \mathbb{I}_n\| < \varepsilon, \quad \text{for all } \alpha \in I.
\label{eq:2}
\end{equation}
Setting \(\tau_\mathrm{log}:= \min\{\rho, \delta_0\}\), \eqref{eq:2} holds for all \(\alpha\in I\) and all \(|\tau|<\tau_\mathrm{log}\), hence, \(\Phi(\tau, \alpha)\in\mathcal{B}_\varepsilon \subset U.\) Therefore, under these conditions, the principal logarithm \(\Omega(\tau, \alpha) := \mathrm{log}\,\Phi(\tau, \alpha)\) is well-defined and analytic.

\par Applying Baker-Campbell-Hausdorff formula gives,
\begin{equation}
    \Omega(\tau, \alpha) = \tau(B_1(\alpha) + B_2(\alpha) ) \,+ \Delta(\tau, \alpha), \quad \Delta\in [\mathfrak{h}, \mathfrak{h}].
    \label{eq:3}
\end{equation}
Here, $[\mathfrak{h}, \mathfrak{h}]$ is the commutator ideal. In particular, the error term in \eqref{eq:3} can be expanded as
\begin{equation*}
    \Delta(\tau, \alpha) 
    = \frac{\tau^2}{2}\alpha(1-\alpha)[A_2, A_1] + O(\tau^3).
\end{equation*}
\begin{lemma} [\(O(\tau^2)\) splitting bound]
Let \(I \subset(0,1)\) be a compact interval. Then, there exist constants \(\tau_0>0, \, C_I\ge 0\) such that \(\text{for all } \alpha\in I, \, \tau\in[0, \tau_0],\) 
\[\|\Phi(\tau, \alpha) - \Psi(\tau, \alpha)\|_P \le C_I\tau^2.\]
\label{lem:uniform_splitting_bound}  
\end{lemma}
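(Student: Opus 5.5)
We argue directly from the power series and Taylor's theorem with integral remainder, rather than through the logarithm $\Omega$ and the BCH expansion \eqref{eq:3}. This avoids the radius-of-convergence constraint $\tau<\tau_{\log}$, although the same constant could also be extracted from \eqref{eq:3}.

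Fix $\alpha\in I$ and set
\[
E(\tau):=\Phi(\tau,\alpha)-\Psi(\tau,\alpha)=e^{\tau B_2}e^{\tau B_1}-e^{\tau(B_1+B_2)}.
\]
The map $E$ is analytic in $\tau$ and satisfies $E(0)=0$. Its first derivative is
\[
E'(\tau)=B_2e^{\tau B_2}e^{\tau B_1}+e^{\tau B_2}B_1e^{\tau B_1}-(B_1+B_2)e^{\tau(B_1+B_2)},
\]
so $E'(0)=B_2+B_1-(B_1+B_2)=0$. The first-order terms cancel exactly, which is the whole reason the bound is quadratic. Taylor's formula with integral remainder then gives
\[
E(\tau)=\int_0^\tau(\tau-s)E''(s)\,ds,
\qquad\text{hence}\qquad
\|E(\tau)\|_P\le\frac{\tau^2}{2}\sup_{s\in[0,\tau]}\|E''(s)\|_P.
\]

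Next we bound $E''$. Differentiating once more,
\[
E''(s)=B_2^2e^{sB_2}e^{sB_1}+2B_2e^{sB_2}B_1e^{sB_1}+e^{sB_2}B_1^2e^{sB_1}-(B_1+B_2)^2e^{s(B_1+B_2)}.
\]
We bound each term using submultiplicativity of the induced norm $\|\cdot\|_P$ and the estimate $\|e^{sM}\|_P\le e^{s\|M\|_P}$. Writing $b_i:=\|B_i(\alpha)\|_P$, this yields
\[
\|E''(s)\|_P\le\bigl((b_1+b_2)^2+(b_1+b_2)^2\bigr)e^{s(b_1+b_2)}=2(b_1+b_2)^2e^{s(b_1+b_2)}.
\]

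It remains to make this uniform in $\alpha$. Since $b_1\le\|A_1\|_P$ and $b_2\le\|A_2\|_P$ for every $\alpha\in(0,1)$, set $a:=\|A_1\|_P+\|A_2\|_P$. Choose any $\tau_0>0$ (for instance $\tau_0=\tau_{\log}$) and put
\[
C_I:=a^2e^{\tau_0 a}.
\]
Then $\|\Phi(\tau,\alpha)-\Psi(\tau,\alpha)\|_P\le C_I\tau^2$ for all $\alpha\in I$ and all $\tau\in[0,\tau_0]$. Alternatively, uniformity follows abstractly: $E''$ is continuous on the compact set $[0,\tau_0]\times I$, so its norm is bounded there.

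The only step requiring care is the cancellation $E'(0)=0$. Without it, the estimate would be only $O(\tau)$.

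A sharper constant is available by isolating the commutator. Using $B_2e^{sB_2}B_1=e^{sB_2}B_2B_1$ and regrouping $E''$, one can write it as a term proportional to $[B_2,B_1]$ plus terms that themselves vanish at $s=0$. At $s=0$ this gives $E''(0)=[B_2,B_1]$, which matches $\frac{\tau^2}{2}\alpha(1-\alpha)[A_2,A_1]$ in the expansion of $\Delta$. This refinement is not needed for the existence claim, where the crude constant above suffices.
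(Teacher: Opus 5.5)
Your proof is correct and follows the paper's argument. Both use $E(0)=0$, $E'(0)=0$ and Taylor's formula with integral remainder, $E(\tau)=\int_0^\tau(\tau-s)E''(s)\,ds$; the paper's proof, like yours, does not go through $\Omega$ or the BCH expansion. The only difference is that the paper bounds $\sup\|\partial_{\tau\tau}F\|_P$ abstractly by compactness of $[0,\tau_0]\times I$, whereas your explicit estimate $\|E''(s)\|_P\le 2(b_1+b_2)^2e^{s(b_1+b_2)}$ gives the concrete constant $C_I=a^2e^{\tau_0 a}$, which is in fact uniform over all $\alpha\in[0,1]$.
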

\begin{proof}
Let \(F(\tau,\alpha):=\Phi(\tau,\alpha)-\Psi(\tau,\alpha)\). Then
\(F(0,\alpha)=0\) and
\(\partial_\tau F(0,\alpha)=0\).
Since matrix exponentials are analytic, Taylor's formula with remainder gives
\[
F(\tau,\alpha)=\int_0^\tau(\tau-s)\partial_{\tau\tau}F(s,\alpha)\,ds .
\]
By compactness, \(M_I:=\sup_{(s,\alpha)\in[0,\tau_0]\times I}
\|\partial_{\tau\tau}F(s,\alpha)\|_P<\infty\) exists, and hence
\(\|F(\tau,\alpha)\|_P\le (M_I/2)\tau^2\). Set \(C_I := {M_I}/{2}\). This proves the lemma.
\end{proof}

\subsection{One-step contraction for the monodromy map}
\begin{theorem} [$2$-mode Lyapunov contraction]
\label{thm:lyap_splitting_rep}
    There exist \(\tau^\ast \in [0,\tau_0]\) and \(C_I \ge0\) such that for all \(\alpha\in I\) and all \(\tau \in (0,\tau^\ast)\), 
    \[W(\Phi(\tau, \alpha)x) \le q(\tau)W(x);\, q(\tau):= (e^{-\eta\tau}+C_I\tau^2)^2 <1.\]
    Consequently, for any fixed \(\tau \in (0, \tau^\ast)\), the discrete-time system \(x_{k+1} = \Phi(\tau,\alpha)x_k\) is exponentially stable uniformly \(\text{for all } \alpha \in I\), and so is the underlying continuous-time periodic switched system. 
\end{theorem}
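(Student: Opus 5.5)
The plan is to combine Lemma~\ref{lem:contraction in the uniformly continuous semigroup} with Lemma~\ref{lem:uniform_splitting_bound} through the triangle inequality in the \(P\)-induced norm. I take \(C_I\) and \(\tau_0\) from Lemma~\ref{lem:uniform_splitting_bound}. For \(\alpha\in I\) and \(\tau\in[0,\tau_0]\), write \(\Phi=\Psi+(\Phi-\Psi)\). Then
\[
\|\Phi(\tau,\alpha)\|_P\le\|\Psi(\tau,\alpha)\|_P+\|\Phi(\tau,\alpha)-\Psi(\tau,\alpha)\|_P\le e^{-\eta\tau}+C_I\tau^2 .
\]
Since \(W(\Phi x)=\|\Phi x\|_P^2\le\|\Phi\|_P^2\|x\|_P^2\), this gives \(W(\Phi(\tau,\alpha)x)\le q(\tau)W(x)\) with \(q(\tau)=(e^{-\eta\tau}+C_I\tau^2)^2\), uniformly in \(\alpha\in I\).

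Next I need \(q(\tau)<1\) on an interval \((0,\tau^\ast)\), which amounts to showing \(g(\tau):=e^{-\eta\tau}+C_I\tau^2<1\) for small \(\tau>0\). If \(C_I=0\) this holds for every \(\tau>0\), so I set \(\tau^\ast=\tau_0\). If \(C_I>0\), note that \(g(0)=1\) and \(g'(0)=-\eta<0\), so \(g\) dips below \(1\) immediately. To get an explicit threshold, I use \(e^{-\eta\tau}\le 1-\eta\tau+\eta^2\tau^2/2\) for \(\tau\ge0\). This gives \(g(\tau)\le 1-\tau\bigl(\eta-(C_I+\eta^2/2)\tau\bigr)\), which is \(<1\) whenever \(0<\tau<\eta/(C_I+\eta^2/2)\). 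I then set \(\tau^\ast:=\min\{\tau_0,\eta/(C_I+\eta^2/2)\}\). Alternatively, \(g\) is convex, so \(\{g<1\}\cap(0,\infty)\) is an interval \((0,\bar\tau)\), and I could take \(\tau^\ast=\min\{\tau_0,\bar\tau\}\).

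For the consequences, fix \(\tau\in(0,\tau^\ast)\). Iterating the one-step bound gives \(\|x_k\|_P\le q(\tau)^{k/2}\|x_0\|_P\), so the discrete-time system is exponentially stable with a rate independent of \(\alpha\in I\). For the continuous-time periodic switched system, any \(t\) can be written as \(t=k\tau+s\) with \(s\in[0,\tau)\). Then \(x(t)\) equals the intra-period propagator (a product of at most two exponentials) applied to \(x_k\). Its \(P\)-norm is bounded by \(e^{\tau(\|A_1\|_P+\|A_2\|_P)}\), since \(\alpha,1-\alpha\le1\), and this constant is uniform in \(\alpha\). Hence \(\|x(t)\|_P\le c\,q^{t/(2\tau)-1/2}\|x_0\|_P\), which is exponential decay in \(t\). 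Equivalence of \(\|\cdot\|_P\) with the Euclidean norm finishes the argument.

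The main subtlety is not analytic but bookkeeping. The constant \(C_I\) in Lemma~\ref{lem:uniform_splitting_bound} must be stated in the same \(P\)-norm as the contraction estimate, and it must be uniform over \(I\). Both are already guaranteed by that lemma, because \(I\) is compact and all norms on \(\mathbb R^{n\times n}\) are equivalent. The remaining step that needs genuine care is the choice of \(\tau^\ast\), where one must ensure \(\tau^\ast\le\tau_0\) so that the splitting bound actually applies.
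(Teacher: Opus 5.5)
Your proposal is correct and follows the paper's proof essentially step for step. Both use the triangle inequality to combine the contraction lemma with the \(O(\tau^2)\) splitting lemma, and the same bound \(e^{-\eta\tau}\le 1-\eta\tau+\tfrac12\eta^2\tau^2\) to get \(\tau^\ast=\min\{\tau_0,\eta/(\tfrac12\eta^2+C_I)\}\), followed by iteration and a uniform bound on the within-period propagator; your explicit constant \(e^{\tau(\|A_1\|_P+\|A_2\|_P)}\) just makes concrete the paper's appeal to boundedness of that propagator on a compact set.
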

\begin{proof}
Triangle inequality, Lemma~\ref{lem:contraction in the uniformly continuous semigroup}
and ~\ref{lem:uniform_splitting_bound} give,
\[
\|\Phi(\tau,\alpha)x\|_P
\le
\big(e^{-\eta\tau}+C_I\tau^2\big)\|x\|_P .
\]
Hence
\[W(\Phi(\tau,\alpha)x)\le q(\tau)W(x),\] with \(q(\tau)\) as stated. Since
\[
e^{-\eta\tau}+C_I\tau^2
\le
1-\eta\tau+\big(\tfrac12\eta^2+C_I\big)\tau^2,
\]
we have \(q(\tau)<1\) whenever
$0<\tau<\min\left\{\tau_0,\frac{\eta}{\frac12\eta^2+C_I}\right\}.$

Iterating the one-step contraction gives
\(
\|x_k\|_P\le q(\tau)^{k/2}\|x_0\|_P
=
e^{-\lambda_d k}\|x_0\|_P,\text{ with }
\lambda_d:=-\tfrac12\log q(\tau)>0 .
\)
The continuous-time conclusion follows from boundedness of the
within-period propagator on the compact set \(s\in[0,\tau]\),
\(\alpha\in I\). This proves the theorem.
\end{proof}

%% file: mmode.tex
\section{$m$-Mode Generalization}
\label{m-mode}
For \(m\) modes in the fixed cyclic order \(1,\ldots,m\), let
\(\alpha\in\mathcal A\subseteq\Delta^{m-1}\) denote the duty vector. Define the scaled generators
\(B_i(\alpha):= \alpha_iA_i\), and the averaged generator, \(S(\alpha) := \sum_{i=1}^m B_i(\alpha) = \sum_{i=1}^m \alpha_i A_i\). The one-period monodromy map is
$\Phi(\tau, \alpha):= e^{\tau B_m(\alpha)}\cdots e^{\tau B_1(\alpha)},$ and the averaged flow map is $\Psi(\tau, \alpha):= e^{\tau S(\alpha)}.$

As the number of switching modes increases, existence of a single common quadratic Lyapunov function (CQLF) becomes a stricter requirement to satisfy. Thus, in the general analysis, we will use parameterized Lyapunov functions, while keeping the theoretical backbone structurally similar to the two-mode case. We take a continuous map \(P: \mathcal{A} \to \mathbb{S}_{++}^n\), and \(\alpha \mapsto P(\alpha)\), defined on a compact subset \(\mathcal{A}\subseteq\Delta^{m-1}\) of the simplex. 
\begin{lemma} [Contraction in the $P(\alpha)$ metric]
\label{lem:avg-mmode}
Let there exist \(\eta>0\) and a map \(P:\mathcal{A}\to \mathbb{S}_{++}^n\) such that for all \(\alpha \in \mathcal{A}\),
\begin{equation*}S(\alpha)^\top P(\alpha) + P(\alpha) S(\alpha) \preceq -2\eta P(\alpha).
\label{eq:lyap}
\end{equation*}
Then $\text{for all } t\ge0$ and \(\alpha \in \mathcal{A},\)
\begin{equation}
    \|e^{tS(\alpha)}\|_{P(\alpha)} \le e^{-\eta t}.
    \label{eq:4}
\end{equation}
\end{lemma}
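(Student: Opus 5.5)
The plan is to prove this pointwise in \(\alpha\), reducing it to the fixed-metric argument of Lemma~\ref{lem:contraction in the uniformly continuous semigroup}. Fix an arbitrary \(\alpha\in\mathcal A\) and set \(P:=P(\alpha)\in\mathbb S^n_{++}\) and \(S:=S(\alpha)\). The key observation is that the metric \(P(\alpha)\) is held fixed along the trajectory. Only the averaged generator \(S(\alpha)\) drives the flow, and \(\alpha\) does not change in time. So no derivative of \(P(\cdot)\) with respect to \(\alpha\) enters, and continuity of \(P\) is not needed for this step.

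Consider the LTI system \(\dot x=Sx\), \(x(0)=x_0\), with solution \(x(t)=e^{tS}x_0\). Define \(W_\alpha(x):=x^\top P(\alpha)x=\|x\|_{P(\alpha)}^2\). Differentiating along trajectories gives
\[
\dot W_\alpha=x^\top\big(S^\top P+PS\big)x\le -2\eta\,x^\top P x=-2\eta W_\alpha,
\]
using the hypothesized Lyapunov inequality at this \(\alpha\). The scalar function \(t\mapsto e^{2\eta t}W_\alpha(x(t))\) has nonpositive derivative, so it is nonincreasing. This is the comparison (Gr\"onwall) step, and it yields \(W_\alpha(x(t))\le e^{-2\eta t}W_\alpha(x_0)\). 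Taking square roots gives \(\|e^{tS}x_0\|_{P(\alpha)}\le e^{-\eta t}\|x_0\|_{P(\alpha)}\).

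Taking the supremum over \(\|x_0\|_{P(\alpha)}=1\) then gives \eqref{eq:4} for this \(\alpha\) and every \(t\ge0\). Since \(\alpha\in\mathcal A\) was arbitrary and \(\eta\) does not depend on \(\alpha\), the bound holds uniformly on \(\mathcal A\).

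There is no substantive obstacle. The only point needing care is to stress that the induced norm in \eqref{eq:4} is taken in the same metric \(P(\alpha)\) as the LMI at that duty vector. This matters later, because norms at different duties are not directly comparable; the constants relating them come from compactness of \(\mathcal A\) and continuity of \(P\), but they are not needed here.
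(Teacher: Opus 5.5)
Your proof is correct and follows the paper's argument: fix \(\alpha\), differentiate \(x^\top P(\alpha)x\) along \(\dot x=S(\alpha)x\) to get \(\dot W_\alpha\le -2\eta W_\alpha\), integrate, take square roots, and take the supremum over \(\|x_0\|_{P(\alpha)}=1\). Your monotonicity argument for \(e^{2\eta t}W_\alpha(x(t))\) simply writes out the Gr\"onwall step that the paper leaves implicit.
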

\begin{proof}
    Fix \(\alpha \in \mathcal{A},\) and consider the averaged system \(\dot x = S(\alpha)\). Let \(V_\alpha(x):= x^\top P(\alpha)x = \|x\|^2_{P(\alpha)}\). Along the trajectories, \(\dot{V_\alpha}= x^\top(S(\alpha)^\top P(\alpha) + P(\alpha) S(\alpha)) x \le -2\eta x^\top P(\alpha) x = -2\eta V_{\alpha}\). Thus, \(V_\alpha(t) \le e^{-2\eta t}V_\alpha(0)\), and \(\|x(t)\|_{P(\alpha)} \le e^{-\eta t}\|x(0)\|_{P(\alpha)}.\) Taking supremum over \(\|x(0)\|_{P(\alpha)} =1\) gives \eqref{eq:4}. This proves the lemma. 
\end{proof}
Define a scalar function, 
\[g(z):= \int_0^1 (1-s) e^{2zs}ds =\begin{cases}
    \frac{e^{2z} -1-2z}{4z^2}, \quad z\ne0, \\
    \frac{1}{2}, \qquad \,\qquad z=0.
\end{cases}\]
\begin{lemma} [Two-factor commutator splitting bound]
\label{lem:twofactor}
    Fix \(Q\succ 0\). For any \(X, Y \in \mathfrak{gl}(n)\), 
    \[\|e^Xe^Y - e^{X+Y}\|_Q \le e^{(\|X\|_Q + \|Y\|_Q)}\,g(\|X\|_Q)\,\|[X, Y]\|_Q.\]
\end{lemma}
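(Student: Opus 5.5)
The plan is a Duhamel-type interpolation between \(e^{X+Y}\) and \(e^Xe^Y\), arranged so that the derivative produces the commutator of \(Y\) with \(e^{sX}\), not with \(e^{sY}\). That ordering is what makes only \(\|X\|_Q\) enter the function \(g\). Throughout, I use two facts about the induced norm \(\|\cdot\|_Q\): it is submultiplicative, and \(\|e^{M}\|_Q\le e^{\|M\|_Q}\) (from the power series).

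First, I define \(W(s):=e^{(1-s)(X+Y)}e^{sX}e^{sY}\) for \(s\in[0,1]\). Then \(W(0)=e^{X+Y}\) and \(W(1)=e^Xe^Y\). Differentiating gives
\[
W'(s)=e^{(1-s)(X+Y)}\bigl(-(X+Y)e^{sX}+e^{sX}X+e^{sX}Y\bigr)e^{sY}.
\]
Since \(X\) commutes with \(e^{sX}\), the middle bracket reduces to \(e^{sX}Y-Ye^{sX}=[e^{sX},Y]\). Hence
\[
e^Xe^Y-e^{X+Y}=\int_0^1 e^{(1-s)(X+Y)}\,[e^{sX},Y]\,e^{sY}\,ds.
\]

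Second, I express \([e^{sX},Y]\) through \([X,Y]\). I write \([e^{sX},Y]=\bigl(e^{sX}Ye^{-sX}-Y\bigr)e^{sX}\). For the bracketed term I use \(\tfrac{d}{dr}\bigl(e^{rX}Ye^{-rX}\bigr)=e^{rX}[X,Y]e^{-rX}\), so that
\[
e^{sX}Ye^{-sX}-Y=\int_0^s e^{rX}[X,Y]e^{-rX}\,dr.
\]
Taking norms gives
\[
\|[e^{sX},Y]\|_Q\le \|[X,Y]\|_Q\, e^{s\|X\|_Q}\int_0^s e^{2r\|X\|_Q}\,dr.
\]

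Third, I combine the estimates. The outer factors are bounded by \(e^{(1-s)(\|X\|_Q+\|Y\|_Q)}\) and \(e^{s\|Y\|_Q}\). Together with the factor \(e^{s\|X\|_Q}\) above, their product is exactly \(e^{\|X\|_Q+\|Y\|_Q}\), independent of \(s\). What remains is the double integral
\[
\int_0^1\!\!\int_0^s e^{2r\|X\|_Q}\,dr\,ds.
\]
By Fubini this equals \(\int_0^1(1-r)e^{2r\|X\|_Q}\,dr=g(\|X\|_Q)\), which yields the stated bound.

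The only real obstacle is choosing the interpolation so that the leftover commutator involves \(e^{sX}\) and the exponential weights telescope to the constant \(e^{\|X\|+\|Y\|}\). The alternative \(e^{sX}e^{sY}e^{-s(X+Y)}\) fails on both counts: it introduces \(\|e^{-s(X+Y)}\|_Q\), and it puts \(\|Y\|_Q\) inside the integral. Once the right \(W\) is fixed, the rest is routine norm bookkeeping.
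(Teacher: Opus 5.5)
Your proof is correct and is essentially the paper's argument. Differentiating $W(s)=e^{(1-s)(X+Y)}e^{sX}e^{sY}$ is just the variation-of-constants formula the paper derives from $E'(s)=(X+Y)E(s)+\Lambda(s)\phi(s)$, and your integrand $e^{(1-s)(X+Y)}[e^{sX},Y]e^{sY}$ coincides with the paper's $e^{(1-s)(X+Y)}\Lambda(s)\phi(s)$, where $\Lambda(s)=e^{sX}Ye^{-sX}-Y$; the conjugation integral for $\Lambda(s)$, the collapse of the exponential weights to $e^{\|X\|_Q+\|Y\|_Q}$, and the Fubini step giving $g(\|X\|_Q)$ then match the paper step for step.
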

\begin{proof}
    For \(s\in[0,1]\), set \(\phi(s) := e^{sX} e^{sY}\), \(\psi(s):= e^{s(X+Y)}\) and \(E(s):= \phi(s) - \psi(s).\) Here, \(E(0) = \phi(0) - \psi(0) = 0\). Differentiating \(\phi(s)\),
    \[\phi'(s) = \frac{d}{ds}(e^{sX}e^{sY}) = Xe^{sX}e^{sY}+ e^{sX}Ye^{sY}. \]
    We know,  
    \[(X+Y) \phi(s) = Xe^{sX}e^{sY}+ Ye^{sX}e^{sY}.\]
    \noindent Thus
    \begin{equation}
        \phi'(s) - (X+Y) \phi(s) = (e^{sX} Ye^{-sX} - Y) \phi(s).
        \label{eq:5}
    \end{equation}
Define $\Lambda(s):=e^{sX}Ye^{-sX}-Y$. Since $\psi'(s)=(X+Y)\psi(s)$,
subtracting from \eqref{eq:5} and writing $E=\phi-\psi$ gives
\begin{equation}
  E'(s) = (X+Y)E(s)+\Lambda(s)\phi(s),\quad E(0)=0. \label{eq:6}
\end{equation}
    From \eqref{eq:6}, variation of constants (with \(s=1\)) yields, 
    \begin{equation}
        E(1) = \int_0^1 e^{(1-s) (X+Y)}\Lambda(s) \phi(s) ds.
        \label{eq:7}
    \end{equation}
    Let \(f(u):= e^{uX}Ye^{-uX}\). Then, \(f'(u) = e^{uX}[X,Y]e^{-uX}\). Note \(f(0) =Y\), and from fundamental theorem of calculus, 
    \begin{equation}
        \Lambda(s) = \int_0^s e^{uX} [X,Y]e^{-uX}du.
        \label{eq:8}
    \end{equation}
    Again, noncommutativity enters only through the commutator.
    Taking the \(Q\)-norm on \eqref{eq:7}, from submultiplicativity,
    \[\|E(1)\|_Q \le \int_0^1 \|e^{(1-s)(X+Y)}\|_Q \,\|\Lambda(s)\|_Q \,\|\phi(s)\|_Q.\]
    Now,
    \[\|e^{(1-s)(X+Y)}\|_Q \le e^{(1-s)\|X+Y\|_Q} \le e^{(1-s)(\|X\|_Q + \|Y\|_Q)},\]
    and \[\|\phi(s)\|_Q \le \|e^{sX}\|_Q \|e^{sY}\|_Q \le e^{s(\|X\|_Q + \|Y\|_Q)}.\]
    Multiplying, 
    \begin{equation}
       \|e^{(1-s)(X+Y)}\|_Q\,\|\phi(s)\|_Q \le e^{(\|X\|_Q + \|Y\|_Q)}. 
       \label{eq:9}
    \end{equation}
  From \eqref{eq:8} and \eqref{eq:9}, 
  \begin{multline}
      \|\Lambda(s)\|_Q\le \int_0^s\|e^{uX}\|_Q\, \|[X,Y]\|_Q\,\|e^{-uX}\|_Q
      \\
      \le \|[X,Y]\|_Q \int_0^s e^{2u\|X\|_Q} \,du.
      \label{eq:10}
  \end{multline}
    From \eqref{eq:10}, Fubini's theorem gives,
    \begin{multline*}
        \|E(1)\|_Q =\|e^Xe^Y - e^{X+Y}\|_Q  \\\le e^{\|X\|_Q+\|Y\|_Q}\|[X,Y]\|_Q\int_0^1\int_0^se^{2u\|X\|_Q}\,du\,ds \\
        = e^{\|X\|_Q+\|Y\|_Q}\|[X,Y]\|_Q\int_0^1\int_u^1e^{2u\|X\|_Q} \,ds \,du\\
        =e^{\|X\|_Q+\|Y\|_Q}\|[X,Y]\|_Q\int_0^1(1-u)\,e^{2u\|X\|_Q}\,du\\
        =e^{\|X\|_Q+\|Y\|_Q}\|[X,Y]\|_Q \frac{e^{2\|X\|_Q} - 1 -2\|X\|_Q}{4\|X\|_Q^2}\\
        =e^{\|X\|_Q+\|Y\|_Q} \,g(\|X\|_Q)\,\|[X,Y]\|_Q.
    \end{multline*}
    This proves the lemma.
    
\end{proof}

Thus the kernel \(g\) is the integral of the conjugation-growth bound over the triangular domain \(0\le u\le s\le1\).

We now apply Lemma~\ref{lem:twofactor} to the \(m\)-factor product \(\Phi\). For fixed \(\alpha \in \mathcal{A}\), define partial sums 
\[S_k(\alpha) := \sum_{i=1}^kB_i(\alpha); \quad k =1, \dots, m, \]
partial products 
\[\Phi_k(\tau, \alpha):= e^{\tau B_k(\alpha)}\cdots e^{\tau B_1(\alpha)},\]
and partial averaged flows 
\[\Psi_k(\tau, \alpha):= e^{\tau S_k(\alpha)}.\]
Define the stage error \(E_k:= \Phi_k -\Psi_k\) with \(E_1 \equiv 0\), and the stage norms with \(1\le i\le k\le m\), 
\[M_i(\alpha) := \|B_i(\alpha)\|_{P(\alpha)},\, K_{ik}(\alpha):= \|[B_k(\alpha), B_i(\alpha)]\|_{P(\alpha)},\]
and \[M_{tot}(\alpha):= \sum_{i=1}^mM_i(\alpha).\]
\begin{lemma}[$m$-mode splitting bound]
   \label{lem:explicit_mmode} 
   For any \(\tau\ge 0\) and \(\alpha \in \mathcal{A}\),
   \begin{multline*}
       \|\Phi(\tau, \alpha) - \Psi(\tau, \alpha)\|_{P(\alpha)}\\\le \tau^2 e^{\tau M_{tot}(\alpha)}\sum_{k=2}^mg(\tau M_k(\alpha))\sum_{i=1}^{k-1}K_{ik}(\alpha).
   \end{multline*}
\end{lemma}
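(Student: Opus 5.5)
The plan is a telescoping induction over the partial products \(\Phi_k\), using Lemma~\ref{lem:twofactor} once at each stage. Fix \(\alpha\in\mathcal A\) and \(\tau\ge0\), and write \(\|\cdot\|\) for \(\|\cdot\|_{P(\alpha)}\). Since \(\Phi_k=e^{\tau B_k}\Phi_{k-1}\) and \(S_k=B_k+S_{k-1}\), the stage error splits as
\[
E_k=e^{\tau B_k}E_{k-1}+\bigl(e^{\tau B_k}e^{\tau S_{k-1}}-e^{\tau B_k+\tau S_{k-1}}\bigr).
\]
The first term propagates the previous error. The second term is a two-factor splitting defect, to which Lemma~\ref{lem:twofactor} applies with \(Q=P(\alpha)\), \(X=\tau B_k\) and \(Y=\tau S_{k-1}\).

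For the stage defect, Lemma~\ref{lem:twofactor} gives the bound
\[
e^{\tau(M_k+\|S_{k-1}\|)}\,g(\tau M_k)\,\tau^2\|[B_k,S_{k-1}]\|.
\]
Two elementary estimates then finish this term. By the triangle inequality, \(\|S_{k-1}\|\le\sum_{i<k}M_i\). By bilinearity of the bracket, \([B_k,S_{k-1}]=\sum_{i<k}[B_k,B_i]\), so \(\|[B_k,S_{k-1}]\|\le\sum_{i<k}K_{ik}\). Hence the stage-\(k\) defect is at most
\[
\tau^2 e^{\tau\sum_{i\le k}M_i}\,g(\tau M_k)\sum_{i<k}K_{ik}.
\]
The propagated term satisfies \(\|e^{\tau B_k}E_{k-1}\|\le e^{\tau M_k}\|E_{k-1}\|\).

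I would then prove by induction on \(k\) the stronger claim
\[
\|E_k\|\le\tau^2e^{\tau\sum_{i\le k}M_i}\sum_{j=2}^k g(\tau M_j)\sum_{i<j}K_{ij}.
\]
The base case is \(E_1=0\). For the inductive step, multiplying the hypothesis for \(E_{k-1}\) by \(e^{\tau M_k}\) turns its exponent into exactly \(e^{\tau\sum_{i\le k}M_i}\). The stage-\(k\) defect carries the same exponential prefactor, so the two contributions combine into the \(k\)-th partial sum. Taking \(k=m\) gives \(\Phi_m=\Phi\), \(\Psi_m=\Psi\) and \(\sum_{i\le m}M_i=M_{tot}\), which is the stated bound.

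The main care point is bookkeeping the exponential growth factors so that they collapse to a single \(e^{\tau M_{tot}}\) rather than accumulating extra factors. This is why the inductive hypothesis uses the partial exponent \(\sum_{i\le k}M_i\) instead of \(M_{tot}\) at every stage. One must also check that the argument of \(g\) comes out as \(\tau M_k\), the norm of the left factor \(X\) in Lemma~\ref{lem:twofactor}. This is guaranteed by putting the new factor \(e^{\tau B_k}\) on the left, consistent with the ordering of \(\Phi\).
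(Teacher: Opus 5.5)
Your proposal is correct and follows the paper's proof essentially step for step. You use the same splitting of $E_k$ into a propagated term and a two-factor defect, the same application of Lemma~\ref{lem:twofactor} with $X=\tau B_k$, $Y=\tau S_{k-1}$, and the same partial-sum control of the exponential factors. The paper phrases that last step through the normalized quantity $F_k=e^{-\tau\sum_{j\le k}M_j}\|E_k\|$, which is equivalent to your induction with partial exponents.
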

\begin{proof}
  Fix \(\alpha\). For \(k\ge2\), 
  \begin{multline*}E_k = \Phi_k - \Psi_k = e^{\tau B_k}\Phi_{k-1} - \Psi_k \\= e^{\tau B_k}(\Phi_{k-1} -\Psi_{k-1}) + (e^{\tau B_k}\Psi_{k-1} -\Psi_k).\end{multline*}
  From submultiplicativity of norms, 
  \begin{equation}
      \|E_k\| \le \|e^{\tau B_k}\| \|E_{k-1}\| + \|e^{\tau B_k}\Psi_{k-1} -\Psi_k\|,
      \label{eq:11}
  \end{equation}
  where \(\|\cdot\|\) denotes \(\|\cdot\|_{P(\alpha)}\) for brevity. Now, 
  \begin{multline*}
      \|e^{\tau B_k}\| \le e^{\tau\|B_k\|} = e^{\tau M_k}; 
      \quad\Psi_{k-1} = e^{\tau S_{k-1}}; \\
      \Psi_k = e^{\tau(S_{k-1}+ B_k)}.
  \end{multline*}
  Lemma~\ref{lem:twofactor} with \(X = \tau B_k\) and \(Y= \tau S_{k-1}\) gives, 
  \begin{equation}
      \|e^{\tau B_k}e^{\tau S_{k-1}} - e^{\tau(S_{k-1}+B_k)}\| \le e^{(\|X\| + \|Y\|)} g(\|X\|) \|[X,Y]\|.
      \label{eq:12}
  \end{equation}
  Compute every factor, 
  \begin{multline}
      \|X\| = \|\tau B_k\| = \tau M_k; \\
      \|Y\| = \|\tau S_{k-1}\| \le \tau \sum_{i=1}^{k-1}\|B_i\| = \tau\sum_{i=1}^{k-1}M_i;\\
      \|[X,Y]\| = \tau^2\|[B_k, S_{k-1}]\| \\\le \tau^2\sum_{i=1}^{k-1}\|[B_k, B_i]\| = \tau^2\sum_{i=1}^{k-1} K_{ik}.
      \label{eq:13}
  \end{multline}
  From \eqref{eq:13},
\begin{equation}
    e^{(\|X\| + \|Y\|)} \le e^{(\tau M_k + \tau\sum_{i=1}^{k-1}M_i)}  = e^{\tau\sum_{i=1}^kM_i}.
    \label{eq:14}
\end{equation}
\eqref{eq:12}-\eqref{eq:14} give, 
\begin{equation}
    \|e^{\tau B_k}\Psi_{k-1} - \Psi_k\| \le \tau^2 e^{\tau\sum_{i=1}^kM_i}g(\tau M_k)\sum_{i=1}^{k-1}K_{ik}.
    \label{eq:15}
\end{equation}
Putting \eqref{eq:15} into \eqref{eq:11},
\begin{equation}
    \|E_k\| \le e^{\tau M_k} \|E_{k-1}\| + \tau^2 e^{\tau\sum_{i=1}^kM_i}g(\tau M_k)\sum_{i=1}^{k-1}K_{ik}.
    \label{eq:16}
\end{equation}
Define \[F_k:= e^{-\tau \sum_{j=1}^kM_j}\|E_k\|.\] Multiplying \eqref{eq:16} with \(e^{-\tau \sum_{j=1}^kM_j}\), for \(k\ge2\), we get
\begin{equation}
    F_k \le F_{k-1} + \tau^2 g(\tau M_k) \sum_{i=1}^{k-1}K_{ik}; \quad F_1 \equiv 0.
    \label{eq:17}
\end{equation}
Expanding \eqref{eq:17} up to \(k=m\) and undoing the multiplication,
\[\|E_m\| \le \tau^2 e^{\tau\sum_{j=1}^m M_j}\sum_{k=2}^mg(\tau M_k) \sum_{i=1}^{k-1}K_{ik}.\]
From the definitions of \(E_k\) and \(M_{tot}\),
   \begin{multline*}
       \|\Phi(\tau, \alpha) - \Psi(\tau, \alpha)\|_{P(\alpha)}\\\le \tau^2 e^{\tau M_{tot}(\alpha)}\sum_{k=2}^mg(\tau M_k(\alpha))\sum_{i=1}^{k-1}K_{ik}(\alpha).
   \end{multline*}
   This proves the lemma.
\end{proof}
\begin{remark}[Commutator graph and cyclic order]
\label{remark:commutator_graph}
For fixed $\alpha$, define a weighted commutator graph whose vertices are the modes
$\{1,\ldots,m\}$ and whose edge weight between modes $i$ and $k$ is
$K_{ik}(\alpha).$
The weight is symmetric in the two modes, since
$[B_k,B_i]=-[B_i,B_k]$ and hence $K_{ik}=K_{ki}$.
Lemma~\ref{lem:explicit_mmode} shows that the splitting error is
controlled by the weighted commutator edges exposed by the ordered
product
\[
    e^{\tau B_m(\alpha)}\cdots e^{\tau B_1(\alpha)} .
\]
At stage $k$, the telescoping argument compares the new factor
$e^{\tau B_k(\alpha)}$ with the averaged flow generated by the
previously accumulated modes
$B_1(\alpha),\ldots,B_{k-1}(\alpha)$. The corresponding commutator
contribution is
\[
    g(\tau M_k(\alpha))
    \sum_{i=1}^{k-1}K_{ik}(\alpha).
\]
Thus, in the dense worst case, the pairwise part of the bound contains
$\binom{m}{2}$ terms and therefore scales quadratically in $m$.
The actual size of the bound, however, is governed by the total weight
of the active commutator edges rather than by $m$ alone. Sparse or weak
commutator structure, and small duty fractions through
$B_i(\alpha)=\alpha_iA_i$, reduce the effective contribution. The bound
is also tied to the prescribed cyclic order: changing the order changes
the telescoping sequence and can alter the resulting certificate.
\end{remark}
Define the scalar bound \begin{multline*}\kappa(\tau, \alpha):= e^{-\eta \tau} + \tau^2 e^{\tau M_{tot}(\alpha)}\sum_{k=2}^mg(\tau M_k(\alpha))\sum_{i=1}^{k-1}K_{ik}(\alpha).\end{multline*}
Lemmas \ref{lem:avg-mmode} and \ref{lem:explicit_mmode} give,
\begin{equation*}
     \|\Phi(\tau, \alpha)\|_{P(\alpha)} \le \kappa(\tau, \alpha).
\end{equation*}

\begin{theorem}[$m$-mode Lyapunov contraction]
\label{thm: mmodethm}
Assume Lemma~\ref{lem:avg-mmode} holds for all \(\alpha\in\mathcal A\) with some \(\eta>0\). For \(\tau>0\), define
\[
\kappa^*(\tau):=\sup_{\alpha\in\mathcal A} \kappa(\tau,\alpha).
\]
If \(\kappa^*(\tau)<1\), then for every fixed \(\alpha \in \mathcal{A}\), the \(\tau\)-periodic switched system determined by \(\alpha\)
is exponentially stable, uniformly over \(\mathcal A\). In particular,
\begin{equation}
\|x(k\tau)\|_{P(\alpha)} \le \kappa^*(\tau)^k \|x(0)\|_{P(\alpha)},
\qquad k=0,1,2,\dots
\label{eq:18}
\end{equation}
and, if \(t=k\tau+s\) with \(s\in[0,\tau)\), then
\begin{equation}
\|x(t)\|_{P(\alpha)}
\le
e^{\,s\Gamma^\ast}\, \kappa^*(\tau)^k \|x(0)\|_{P(\alpha)},
\qquad \alpha\in\mathcal A,
\label{eq:19}
\end{equation}
where
\[
\Gamma(\alpha):=\max_{1\le i\le m}\|A_i\|_{P(\alpha)}, \quad \Gamma^\ast := \sup_{\alpha\in \mathcal{A}} \,\Gamma(\alpha) <\infty.
\]
\end{theorem}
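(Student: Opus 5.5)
The plan is to combine the one-period bound $\|\Phi(\tau,\alpha)\|_{P(\alpha)}\le\kappa(\tau,\alpha)$, already obtained from Lemmas~\ref{lem:avg-mmode} and~\ref{lem:explicit_mmode}, with the periodic structure of the switched trajectory.

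\textbf{Step 1 (one-period bound).} Fix $\alpha\in\mathcal A$ and write $\Phi=\Psi+(\Phi-\Psi)$. The triangle inequality gives
\[
\|\Phi\|_{P(\alpha)}\le\|e^{\tau S(\alpha)}\|_{P(\alpha)}+\|\Phi-\Psi\|_{P(\alpha)} .
\]
The first term is at most $e^{-\eta\tau}$ by \eqref{eq:4}. The second term is bounded by Lemma~\ref{lem:explicit_mmode}. Together these give $\|\Phi(\tau,\alpha)\|_{P(\alpha)}\le\kappa(\tau,\alpha)\le\kappa^\ast(\tau)$.

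\textbf{Step 2 (sampled estimate \eqref{eq:18}).} Because the schedule is $\tau$-periodic with a fixed cyclic order, $x((k+1)\tau)=\Phi(\tau,\alpha)\,x(k\tau)$. The metric $P(\alpha)$ depends only on $\alpha$, not on $k$, so the same norm applies at every period. Induction on $k$ then yields \eqref{eq:18}. The decay rate $-\log\kappa^\ast(\tau)>0$ does not depend on $\alpha$, so the decay is uniform over $\mathcal A$.

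\textbf{Step 3 (intra-period estimate \eqref{eq:19}).} For $t=k\tau+s$ with $s\in[0,\tau)$, the map from $x(k\tau)$ to $x(t)$ is a partial product of the form
\[
e^{s_j A_j}e^{\alpha_{j-1}\tau A_{j-1}}\cdots e^{\alpha_1\tau A_1},
\]
where the total elapsed time satisfies $\sum(\text{dwell times})=s$. Each factor obeys $\|e^{rA_i}\|_{P(\alpha)}\le e^{r\|A_i\|_{P(\alpha)}}\le e^{r\Gamma(\alpha)}$. Submultiplicativity then bounds the product by $e^{s\Gamma(\alpha)}\le e^{s\Gamma^\ast}$. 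Combining this with Step 2 gives \eqref{eq:19}.

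\textbf{Step 4 (finiteness of $\Gamma^\ast$).} This is the only step needing care. Since $P$ is continuous on the compact set $\mathcal A$ and takes values in $\mathbb S^n_{++}$, its eigenvalues are bounded above and away from zero uniformly on $\mathcal A$. The identity $\|M\|_{P}=\|P^{1/2}MP^{-1/2}\|_2$ then bounds $\|A_i\|_{P(\alpha)}$ by $\sqrt{\lambda_{\max}(P(\alpha))/\lambda_{\min}(P(\alpha))}\,\|A_i\|_2$, which is uniformly bounded on $\mathcal A$. Hence $\Gamma^\ast<\infty$.

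Exponential stability in continuous time follows from \eqref{eq:19}. Because $s<\tau$, we have $\kappa^\ast(\tau)^k\le\kappa^\ast(\tau)^{t/\tau-1}$, so
\[
\|x(t)\|_{P(\alpha)}\le e^{\tau\Gamma^\ast}\kappa^\ast(\tau)^{-1}\,e^{-\lambda t}\,\|x(0)\|_{P(\alpha)},\qquad \lambda=-\tfrac{1}{\tau}\log\kappa^\ast(\tau)>0.
\]
Uniformity in the Euclidean norm follows from the same eigenvalue bounds on $P(\alpha)$ used in Step 4.
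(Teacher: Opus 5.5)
Your proposal is correct and follows the paper's proof essentially step for step: you bound the one-period map by $\|\Phi(\tau,\alpha)\|_{P(\alpha)}\le\kappa(\tau,\alpha)\le\kappa^\ast(\tau)$, iterate the sampled map to get \eqref{eq:18}, and apply submultiplicativity over the partial-period mode flows to get \eqref{eq:19}. Your Step 4 (finiteness of $\Gamma^\ast$ from continuity of $P$ on the compact set $\mathcal A$ together with $\|M\|_P=\|P^{1/2}MP^{-1/2}\|_2$) and your explicit uniform decay rate $-\tfrac{1}{\tau}\log\kappa^\ast(\tau)$ are small additions that fill in what the paper asserts or leaves implicit; they do not constitute a different route.
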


\begin{proof}
By the preceding estimate,
\[
\|\Phi(\tau,\alpha)\|_{P(\alpha)} \le \kappa(\tau,\alpha)\le \kappa^*(\tau),
\qquad \text{for all } \alpha\in\mathcal A.
\]
Define the sampled sequence \(x_k:=x(k\tau)\). Then
\[
x_{k+1}=\Phi(\tau,\alpha)x_k,
\]
and hence
\[
\|x_{k+1}\|_{P(\alpha)}
\le
\|\Phi(\tau,\alpha)\|_{P(\alpha)}\|x_k\|_{P(\alpha)}
\le
\kappa^*(\tau)\|x_k\|_{P(\alpha)}.
\]
Iterating,
\[
\|x(k\tau)\|_{P(\alpha)}=\|x_k\|_{P(\alpha)}
\le
\kappa^*(\tau)^k\|x_0\|_{P(\alpha)},
\]
which proves \eqref{eq:18}. Now let \(t=k\tau+s\) with \(s\in[0,\tau)\). Starting from time \(k\tau\), the state evolves over an interval of total length \(s\) through a product of mode flows. Thus, by submultiplicativity of the induced norm and the estimate
\[
\|e^{rA_i}\|_{P(\alpha)}\le e^{r\|A_i\|_{P(\alpha)}},\qquad r\ge 0,
\]
we obtain
\[
\|x(t)\|_{P(\alpha)}
\le
e^{\,s\Gamma(\alpha)} \|x(k\tau)\|_{P(\alpha)}.
\]
Combining this with \eqref{eq:18} and using \(\Gamma(\alpha) \le \Gamma^\ast\) yield
\[
\|x(t)\|_{P(\alpha)}
\le
e^{\,s\Gamma^\ast}\, \kappa^*(\tau)^k \|x(0)\|_{P(\alpha)},
\]
which is \eqref{eq:19}. This completes the proof.
\end{proof}
    \begin{remark} [Stability certificate]
Define the maximal certified switching period
\[
\tau^\ast := \sup\big\{\tau>0 : \kappa^\ast(s)<1 \text{ for all } s\in(0,\tau]\big\}.
\]
Then, for every \(\tau\in(0,\tau^\ast)\), the \(\tau\)-periodic switched system indexed by \(\alpha \in \mathcal{A}\) is uniformly exponentially stable over \(\mathcal{A}\).
Thus, the two-mode stability rectangle generalizes to the product set
\[
(0,\tau^\ast)\times \mathcal A,
\]
where \(\mathcal A\subseteq\Delta^{m-1}\) is the admissible duty-fraction region.
\end{remark}

%% file: example.tex
\section {Numerical Example}

\label{example}

We illustrate the \(m\)-mode certificate on a three-mode cyclic system and its commutator-graph interpretation. Since the Lyapunov
inequalities and the maximization defining \(\kappa^\ast(\tau)\) are enforced
on a finite duty grid, the reported thresholds are sampled certificates.

\subsection{Sampled certificate computation}

We synthesize a duty-fraction-dependent quadratic metric of affine form
\begin{equation*}
  P(\alpha)=\sum_{i=1}^m \alpha_i P_i,\qquad P_i\in\mathbb S_{++}^n,  
  \label{eq:30}
\end{equation*}
over a compact admissible duty set \(\mathcal A\subseteq \Delta^{m-1}\). Let
\[
S(\alpha)=\sum_{i=1}^m \alpha_i A_i
\]
denote the averaged generator. We impose the Lyapunov constraints on a finite grid \(\mathcal{G} = \{\alpha^{(k)}\}_{k=1}^N\subset \mathcal A\). For a fixed value of \(\eta\) and a small \(\varepsilon>0\), we solve the feasibility problem
\[
\begin{aligned}
\text{find}\quad & \{P_i\}_{i=1}^m\\
\text{s.t.}\quad
& P_i \succeq \varepsilon \mathbb{I}_n,\qquad i=1,\ldots,m,\\
& S(\alpha^{(k)})^\top P(\alpha^{(k)})
  + P(\alpha^{(k)})S(\alpha^{(k)})
  + 2\eta P(\alpha^{(k)}) \preceq 0,\\
&\hfill k=1,\dots,N,\\
& \sum_{i=1}^m \operatorname{tr}(P_i)=m.
\end{aligned}
\]
The constraint \(P_i\succeq \varepsilon \mathbb{I}_n\) implies
\(P(\alpha)\succeq \varepsilon \mathbb{I}_n\) for all \(\alpha\in\Delta^{m-1}\).
The trace constraint fixes the homogeneous scaling of the Lyapunov
inequalities. Since the feasibility problem is convex for fixed
\(\eta\), the largest decay rate is computed by bisection over \(\eta\).
\begin{algorithm}[h]
 \caption{Sampled computation of the switching period}
 \begin{algorithmic}[1]
 \renewcommand{\algorithmicrequire}{\textbf{Input:}}
 \renewcommand{\algorithmicensure}{\textbf{Output:}}
 \REQUIRE matrices \(A_1, \ldots, A_m,\) prescribed cyclic order, duty grid \(\mathcal{G} \subset \mathcal{A}\), and metric parameterization \(P(\alpha)\).
 \ENSURE sampled certified switching period \(\tau^\ast_{\mathcal{G}}\).
  \STATE For each \(\alpha\in\mathcal{G}\), form the averaged generator \(S(\alpha)\).
  \STATE Use bisection over \(\eta\). For each fixed \(\eta\), solve the sampled SDP to obtain the matrices \(P_i\), and hence \(P(\alpha)\).
  \STATE For each $\alpha \in \mathcal{G}$, compute the scaled generators \(B_i(\alpha) = \alpha_iA_i\), their induced metric norms $M_i(\alpha)= \|B_i(\alpha)\|_{P(\alpha)}$, and the pairwise commutator norms $K_{ik}(\alpha) = \|[B_k(\alpha), B_i(\alpha)]\|_{P(\alpha)}$.
  \STATE For each candidate period $\tau>0$, evaluate $\kappa^\ast_{\mathcal{G}}(\tau):=$
  \[ \max_{\alpha \in \mathcal{G}}\left [ e^{-\eta \tau} + \tau^2 e^{\tau M_{tot}(\alpha)}\sum_{k=2}^m g(\tau M_k(\alpha)) \sum_{i=1}^{k-1}K_{ik}(\alpha)\right],\]
  where $M_{tot}(\alpha) = \sum_i M_i(\alpha)$.
  \RETURN $\tau^\ast_{\mathcal{G}}= \sup\{\tau>0: \kappa^\ast_{\mathcal{G}}(s)<1 \text{ for all } s \in (0, \tau]\}$.
 \end{algorithmic}
 \end{algorithm}
\subsection{Three-mode family}
Define the mode matrices $A_1$, $A_2$ and $A_3$:
\[ \begin{bmatrix}
    -1 & 4 & 0\\
    0 & -1 & 0\\
    0 & 0 & -1
\end{bmatrix},   \begin{bmatrix}
    -1 & 0 & 0\\
    -4 & -1 & 0\\
    0 & 0 & -1
\end{bmatrix},   \begin{bmatrix}
    -1 & 1.6 & 0\\
    0 & -1 & r\\
    0 & 0 & -1
\end{bmatrix}.\] 
The cyclic order is fixed as $1, 2, 3$, so the monodromy is
\[\Phi(\tau, \alpha) = e^{\tau\alpha_3A_3}e^{\tau\alpha_2A_2}e^{\tau\alpha_1A_1}.\] Direct computation gives
\[
\|[A_2, A_1]\|_2 = 16, \quad
\|[A_3, A_2]\|_2 = 6.4, \quad
\|[A_3, A_1]\|_2 = 4|r|.
\]
These are unscaled Euclidean commutator strengths; the true edge weights
entering the certificate are \(K_{ik}(\alpha)\).
The parameter \(r\) activates the edge between modes \(1\) and \(3\),
while the other two pairwise commutator strengths remain fixed. This makes the family a controlled test case for the commutator-graph interpretation in Remark~\ref{remark:commutator_graph}. We use the admissible duty set $\mathcal{A} = \{\alpha\in \Delta^2: \alpha_i \ge 0.1,\, i=1, 2, 3\}$, and the finite grid $\mathcal{G}$ contains $210$ points.
\subsection{Results}
For comparison with the sufficient certificate, define the sampled
Floquet threshold by \(\tau_F^{\mathcal G}
:=
\min_{\alpha\in\mathcal G}\tau_F(\alpha)\), where
\[
\tau_F(\alpha)
:=
\sup\{\tau>0:\rho(\Phi(s,\alpha))<1
\text{ for all }s\in(0,\tau]\}.\] 
Across the sweep \(r=0,0.2,\ldots,2.0\), the sampled Floquet
threshold was unchanged at \(\tau_F^{\mathcal G}=1.33464\), with worst sampled duty
\(\alpha\approx(0.4194,0.4516,0.1290)\). This is consistent with the block
upper-triangular form of the monodromy: the \(r\)-dependent term enters only through a feedforward block and hence does not affect the monodromy spectrum. Table~\ref{tab:three_mode} reports representative values from the sweep. The certified period decreases as the activated commutator edge \(\|[A_3,A_1]\|_2\) grows. A full sweep over \(r=0,0.2,\ldots,2.0\) showed the same qualitative
behavior: the affine metric certified all sampled duty points, and
\(\tau^\ast_{\mathcal G}\) decreased from \(0.1926\) to \(0.1469\) as the
edge weight increased.
\begin{table}[h]
\centering
\caption{Sampled certificates for
\(\mathcal G\subset\{\alpha\in\Delta^2:\alpha_i\ge 0.1\}.\)}
\label{tab:three_mode}

\begin{tabular}{c c c c c}
\hline
\(r\) & \(\|[A_3,A_1]\|_2\) (unscaled)& \(\eta^\ast\) &
\(\tau^\ast_{\mathcal G}\) &
\(\tau_F^{\mathcal G}/\tau^\ast_{\mathcal G}\) \\
\hline
0.0 & 0.0 & 0.9836 & 0.1926 & 6.93 \\
0.4 & 1.6 & 0.8749 & 0.1810 & 7.38
\\
0.8 & 3.2 & 0.8615 & 0.1729 & 7.72 \\
1.2 &  4.8 & 0.8289 & 0.1713 & 7.79
\\
1.6 &  6.4 & 0.7377 & 0.1554 & 8.59
\\
2.0 & 8.0 & 0.6753 & 0.1469 & 9.09 \\
\hline
\end{tabular}%
\end{table}\\
The example isolates a concrete source of conservatism in the
commutator-norm certificate. Increasing \(r\) strengthens the
commutator edge \(\{1,3\}\), and the certified switching period
decreases accordingly. However, this same coupling is feedforward in
the monodromy, so the sampled Floquet threshold is unchanged. Thus the
certificate penalizes a norm-visible commutator interaction that is
spectrally invisible in this family. The certificate is therefore most
useful as a computable uniform sufficient certificate over duty sets and
prescribed cyclic orders, especially when a single certificate is
desired across many schedules, rather than as an exact Floquet
characterization for a fixed duty vector.